\newtheorem{defin}{}
\newtheorem{saetze}[defin]{}
\newtheorem{conjec}[defin]{}
\newtheorem{lemmas}[defin]{}
\newtheorem{folger}[defin]{}
\newtheorem{bemerk}[defin]{}
\newenvironment{lemma}    {\begin{lemmas}\it {\bf Lemma:}}{\end{lemmas}}
\newenvironment{remark}   {\begin{bemerk}\rm {\it Remark:}}{\end{bemerk}}
\newenvironment{proof}    {{\it Proof}:}{{\hfill \fillbox \bigskip}}
\newcommand{\fillbox}{\mbox{$\bullet$}}
\newcommand{\ol}{\overline}
\newcommand{\Z}{\mathbb Z}
\newenvironment{items}{\begin{list}{$\alph{item})$}
{\labelwidth18pt \leftmargin18pt \topsep3pt \itemsep1pt \parsep0pt}}
{\end{list}}
\newcommand{\bulit}{\item[$\bullet$]}
\begin{document}

\title{Computing the order and the index of \\
       a subgroup in a polycyclic group}
\author{Bettina Eick}
\date{\today}
\maketitle

\begin{abstract}
This contains a new version of the so-called 'non-commutative Gauss' 
algorithm for polycyclic groups. Its results allow to read off the 
order and the index of a subgroup in an (possibly infinite) polycyclic
group.
\end{abstract}

\section{Introduction}

Practical algorithms to compute with finite polycyclic groups have been 
described by Laue, Neub\"user \& Schoenwaelder in \cite{LNS84}. A main 
basis for most of these algorithms is the so-called 'non-commutative 
Gauss' algorithm. Given a finite polycyclic group $G$ and a finite set
of generators for a subgroup $U$, this computes a so-called induced 
polycyclic generating sequence for $U$. In turn, this allows to read 
off $|U|$ and $[G:U]$ and forms a basis for many other algorithms.

Eick \cite{Eic01} introduced various practical algorithms to compute with 
possibly infinite polycyclic groups. These also included an extended version
of the 'non-commutative Gauss' algorithm. The proof of this extended version
has gaps. It is the main aim here to cover this open problem and to 
introduce a practical and reliable version of the 'non-commutative Gauss'
algorithm for possibly infinite polycyclic groups.

A GAP implementation of the code described here is available online,
see \cite{Eic21}.

\section{Preliminaries}

We first introduce the setting of the algorithm. We assume that the 
group $G$ is given by a consistent polycyclic presentation; that is, 
it has generators $g_1, \ldots, g_n$ and non-negative integers 
$r_1, \ldots, r_n$ and relations of the form
\begin{eqnarray*}
g_i g_j &=& g_j g_{j+1}^{a_{i,j,j+1}} \cdots g_n^{a_{i,j,n}} 
 \mbox{ for } 1 \leq j < i \leq n\\
g_i g_j^{-1} &=& g_j^{-1} g_{j+1}^{b_{i,j,j+1}} \cdots g_n^{b_{i,j,n}} 
 \mbox{ for } 1 \leq j < i \leq n \mbox{ with } r_i = 0 \\
g_i^{r_i} &=& g_{i+1}^{c_{i,i+1}} \cdots g_n^{c_{i,n}}
 \mbox{ for } 1 \leq i \leq n \mbox{ with } r_i > 0,
\end{eqnarray*}
where $a_{i,j,k}, b_{i,j,k}, c_{i,k}$ are integers that are contained in
$\{0, \ldots, r_k-1\}$ if $r_k > 0$. Additionally, it is required that
for each element $g$ of $G$ there exists a unique $(e_1, \ldots, e_n) \in
\Z^n$ with $0 \leq e_k < r_k$ if $r_k>0$ so that 
\[ g = g_1^{e_1} \cdots g_n^{e_n}.\]

We call $g_1^{e_1} \cdots g_n^{e_n}$ the {\em normal form} of the element
$g$. It can be computed readily for any arbitrary word in the generators
by iteratedly applying the relations of the group.

Let $G_i = \langle g_i, \ldots, g_n \rangle$ for $1 \leq i \leq n$. The 
relations of $G$ imply that $G_{i+1} \unlhd G_i$ for $1 \leq i \leq n-1$
and $G_i/G_{i+1}$ is cyclic. The consistency of the presentation implies
that $[G_i : G_{i+1}] = r_i$ if $r_i > 0$ and $[G_i: G_{i+1}]=\infty$
if $r_i = 0$.

We introduce some further notation for elements in a group $G$ given by
a consistent polycyclic presentation. Let $g = g_1^{e_1} \cdots g_n^{e_n}$
be a normal form and assume that $g \neq 1$. Then
\begin{items}
\bulit
the {\em depth} of $g$ is $d$ if $e_1 = \ldots = e_{d-1} = 0$ and 
$e_d \neq 0$. Write $d(g)$.
\bulit
the {\em leading exponent} of $g$ is $e_d$. Write $l(g)$.
\bulit
the {\em relative order} of $g$ is $|g G_{d+1}|$. Write $r(g)$.
\end{items}

If $g = 1$, then we say that $g$ has depth $n+1$ and leading exponent or
relative order of $g$ do not exist. If $g \neq 1$, then $r(g) = \infty$ 
if $r_d = 0$ and $r(g) \mid r_d$ otherwise.

\section{Subgroups, Igs and Cgs}

Let $G$ be given by a consistent polycyclic presentation. A generating
set $u_1, \ldots, u_m$ of a subgroup $U$ is an {\em igs} if the series
$U_i = \langle u_i, \ldots, u_m \rangle$ with $1 \leq i \leq m$ coincides
with the series $G_i \cap U$ for $1 \leq i \leq n$ where duplicates have
been removed. The following has been proved in \cite{Eic01}.

\begin{lemma}
\label{igs}
Let $u_1, \ldots, u_m$ be a generating set for $U$. Then $u_1, \ldots, u_m$ 
is an igs for $U$ if and only if
\begin{items}
\bulit
$u_i^{u_j} \in U_{j+1}$ for $1 \leq j < i \leq m$, 
\bulit
$u_i^{r(u_i)} \in U_{i+1}$ for $1 \leq i \leq m$ with $r(u_i) > 0$, 
\bulit
$d(u_1) < d(u_2) < \ldots < d(u_m)$.
\end{items}
\end{lemma}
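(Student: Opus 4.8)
The plan is to prove both implications by reducing the combinatorial igs condition to the single structural statement that $U_i = U \cap G_{d(u_i)}$ for every $i$, together with $U_{i+1} = U \cap G_{d(u_i)+1}$ and strictly increasing depths $d(u_1) < \cdots < d(u_m)$.

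For the forward direction I would first extract this structural statement from the definition. Writing the distinct terms of the filtration $U = U\cap G_1 \supseteq U \cap G_2 \supseteq \cdots$ as a strictly decreasing chain and matching it term by term with $U_1 \supsetneq \cdots \supsetneq U_m$ (which is exactly what ``duplicates removed'' yields), the fact that $u_i \in U_i \setminus U_{i+1}$ forces $d(u_i)$ to be the largest index $k$ with $U \cap G_k = U_i$; hence the $d(u_i)$ strictly increase, $U_i = U \cap G_{d(u_i)}$, and $U_{i+1} = U \cap G_{d(u_i)+1}$. Granting this, (c) is immediate, and (a), (b) follow from only two facts: $U$ is closed under products and inverses, and $G_{d+1} \unlhd G_d$. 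Indeed, for $j<i$ we have $u_i \in U_i \subseteq U_{j+1} = U\cap G_{d(u_j)+1}$ and $u_j \in G_{d(u_j)}$, so $u_i^{u_j} \in U \cap G_{d(u_j)+1} = U_{j+1}$; and $u_i^{r(u_i)} \in G_{d(u_i)+1}$ by the definition of relative order, whence $u_i^{r(u_i)} \in U \cap G_{d(u_i)+1} = U_{i+1}$.

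For the converse I again target $U_i = U \cap G_{d(u_i)}$, and here the crux — and what I expect to be the main obstacle — is the normality $U_{j+1} \unlhd U_j$. Condition (a) only yields the one-sided inclusion $U_{j+1}^{u_j} \subseteq U_{j+1}$, since the $u_i$ with $i>j$ generate $U_{j+1}$ and conjugation is an automorphism. In a finite group $u_j^{-1}$ is a positive power of $u_j$, so this would suffice; for infinite $u_j$ one must upgrade the inclusion to an equality. I would do this via the maximal condition on subgroups of the polycyclic group $G_{d(u_j)+1}$: because $U_{j+1} \subseteq G_{d(u_j)+1}$ and conjugation by $u_j$ is an automorphism $\Phi$ of $G_{d(u_j)+1}$ (as $u_j \in G_{d(u_j)}$ and $G_{d(u_j)+1} \unlhd G_{d(u_j)}$), the inclusion $\Phi(U_{j+1}) \subseteq U_{j+1}$ produces the ascending chain $U_{j+1} \subseteq \Phi^{-1}(U_{j+1}) \subseteq \Phi^{-2}(U_{j+1}) \subseteq \cdots$, which stabilises; hence $\Phi(U_{j+1}) = U_{j+1}$ and $U_{j+1} \unlhd U_j$.

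With normality established the remainder is routine. Normality gives $U_j = \langle u_j \rangle U_{j+1}$, so by descending recursion every element of $U$ has a normal form $u_1^{f_1} \cdots u_m^{f_m}$, and (b) lets me reduce each $f_j$ modulo $r(u_j)$ when $r(u_j)>0$. If $p$ is the least index with $f_p \neq 0$, then $u_p^{f_p} \notin G_{d(u_p)+1}$ while the tail lies in $U_{p+1} \subseteq G_{d(u_p)+1}$, so $d(x) = d(u_p)$. Consequently, for $x \in U \cap G_{d(u_i)}$ the least nonzero index satisfies $d(u_p) = d(x) \ge d(u_i)$, hence $p \ge i$ and $x \in U_p \subseteq U_i$; with the trivial inclusion $U_i \subseteq U \cap G_{d(u_i)}$ this gives $U_i = U \cap G_{d(u_i)}$. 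Finally, since every nontrivial element of $U$ has depth in $\{d(u_1), \ldots, d(u_m)\}$, these subgroups are precisely the distinct terms of the filtration $U \cap G_k$, which is the igs condition.
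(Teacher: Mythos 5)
Your proof is correct, and its skeleton matches the paper's: the forward direction identifies $d(u_i)$ as the largest $k$ with $U\cap G_k = U_i$ (the paper's ``choose $j$ maximal with $U_i = G_j\cap U$'') and reads off the three conditions; the converse establishes $U_{i+1}\unlhd U_i$ and then recovers $U_i = U\cap G_{d(u_i)}$ by a normal-form and depth argument. Where you genuinely add something is at the step the paper disposes of in one clause, ``by item (a) it follows that $U_{i+1}\unlhd U_i$.'' As you observe, condition (a) only yields $U_{j+1}^{u_j}\subseteq U_{j+1}$, and in an infinite group a subgroup can be conjugated properly into itself, so this inclusion alone does not give normality; your upgrade to equality via the ascending chain $U_{j+1}\subseteq \Phi^{-1}(U_{j+1})\subseteq\Phi^{-2}(U_{j+1})\subseteq\cdots$ and the maximal condition on subgroups of a polycyclic group is exactly the right repair, and it is the kind of gap this paper is explicitly concerned with elsewhere. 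The rest of your converse (reduction of exponents modulo $r(u_j)$ using (b), the least nonzero exponent determining the depth, and the identification of the distinct terms of the chain $U\cap G_k$) is a careful expansion of the paper's ``induction now yields the desired result.'' In short: same route, but your version closes a real hole in the written proof rather than merely paraphrasing it.
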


\begin{proof}
We include a proof for completeness. \\
(1) First assume that $u_1, \ldots, u_m$ is an igs. Choose $j$ maximal with
$U_i = G_j \cap U$. Then $U_{i+1} = G_{j+1} \cap U$ and thus $U_{i+1}
\unlhd U_i$ with $U_i/U_{i+1}$ of order $r(u_i)$. Thus all three items
follow. \\
(2) Now assume that the three items are satisfied. By item (a) it follows
that $U_{i+1} \unlhd U_i$ and by construction and item (b) the quotient
$U_i/U_{i+1}$ is cyclic of order $r(u_i)$ if $r(u_i)>0$ and cyclic of order 
$\infty$ if $r(u_i) = 0$. Induction now yields the desired result.
\end{proof}

\section{Computing an igs}

We assume that generators $h_1, \ldots, h_l$ for a subgroup $U$ of $G$
are given. Our aim is to determine an igs for $U$. 

\subsection{Normalisations of elements}

Let $g \in G$, $g \neq 1$, with depth $d$, leading exponent $a$ and 
relative order $r_d$. If $r_d = 0$, then let $e = sign(a)$ and call 
$g^e$ the {\em normalisation} of $g$. If $r_d > 0$, then write $a = xy$ 
with $x = gcd(a,m)$ and $y = a/x$. Note that $z = y^{-1} \bmod m$ exists
and call $g^z$ the {\em normalization} of $g$. 

\begin{remark}
\label{norm}
Let $g \in G$, $g \neq 1$, with depth $d$ and normalisation $h$.
\begin{items}
\item[\rm (a)]
$d(h) = d(g)$
\item[\rm (b)]
$l(h) \mid l(g)$.
\item[\rm (c)] 
$\langle g, G_{d+1} \rangle = \langle h, G_{d+1} \rangle$.
\end{items}
\end{remark}

Remark \ref{norm} indicates why the normalisation of an element is of
interest: in the cyclic group $G_d/G_{d+1}$ it yields the unique generator 
of $\langle g G_{d+1} \rangle$ with smallest leading exponent.

\subsection{Partial Igs}

A {\em partial igs} is a list of length $n$ (the number of generators of 
the parent group $G$) whose $i$th entry is either empty or a normalised
element $g$ in $G$ of depth $i$.

The following function takes a partial igs $I$ and an arbitrary element 
$g \in G$ and determines a new partial igs $J$ so that $\langle J \rangle 
= \langle I, g \rangle$. 
\bigskip

{\bf AddGenToPIgs}($I$, $g$):
\begin{items}
\item[(1)] 
Initialise $L$ as the list with a single entry $g$.
\item[(2)] 
While $L$ is not empty do:
\begin{items}
\item[(a)]
Take an element $h$ from $L$ and eliminate it in $L$.
\item[(b)]
Let $d = d(h)$. If $d > n$ then go back to (2).
\item[(c)]
If $I[d]$ is empty then: 
\begin{items}
\item[(i)]
Insert the normalisation of $h$ at position $d$ in $I$.
\item[(ii)]
If $r_d > 0$ then add $h \cdot I[d]^{-l(h)/l(I[d])}$ to $L$.
\end{items}
\item[(d)]
If $I[d]$ is not empty then:
\begin{items}
\item[(i)]
Let $k = I[d]$ and $b = l(k)$ and $a = l(h)$. 
\item[(ii)]
Let $e = gcd(a,b) = ua + vb$ and $w = h^u k^v$.
\item[(iii)]
Insert the normalisation of $w$ at position $d$ in $I$.
\item[(iv)]
Add $h \cdot I[d]^{-l(h)/l(I[d])}$ to $L$.
\item[(v)]
Add $k \cdot I[d]^{-l(k)/l(I[d])}$ to $L$.
\end{items}
\end{items}
\end{items}
\bigskip

First, note that in Steps (2d)(iv) and (2d)(v), the quotients $l(h)/l(I[d])$
and $l(k)/l(I[d])$ are integers, since $l(I[d]) \mid l(w) = e = gcd(a,b)$
and $a = l(h)$ and $b=l(k)$. Hence these Steps yield elements of $G$ that
are added to $L$. 

Second, in the Steps (2c)(ii),  (2d)(iv) and (2d)(v) there are elements of $G$
added to $L$. All of these elements have depth greater than $d$. This
implies that the algorithm terminates eventually.

\begin{lemma}
Let $J = AddGenToPIgs(I, g)$. Then $J$ is a partial igs
satisfying $\langle J \rangle = \langle I, g \rangle$.
\end{lemma}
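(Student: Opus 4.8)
The plan is to prove two things separately: that the algorithm's output $J$ satisfies $\langle J \rangle = \langle I, g \rangle$ (correctness of the generated group) and that $J$ is in fact a partial igs (it satisfies the structural conditions). For both I would argue by tracking invariants through the main while-loop in Step (2).

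For the subgroup-equality claim, I would introduce the invariant that $\langle I, L \rangle$ (the group generated by all current nonempty entries of $I$ together with all elements still held in the list $L$) is unchanged by each pass through the loop body. At initialisation this group is $\langle I, g \rangle$, and at termination $L$ is empty so it equals $\langle J \rangle$, giving the result. Checking invariance amounts to inspecting each branch: in Step (2c), when we insert the normalisation $h'$ of $h$ at the empty position $d$, Remark \ref{norm}(c) gives $\langle h' , G_{d+1}\rangle = \langle h, G_{d+1}\rangle$, and the element $h \cdot I[d]^{-l(h)/l(I[d])}$ added back to $L$ lies in $\langle h, I[d]\rangle$, so nothing is gained or lost; in Step (2d) the element $w = h^u k^v$ is manifestly in $\langle h, k\rangle$ and, since $e = \gcd(a,b)$ is a $\Z$-linear combination realising $l(w)=e$, conversely $h$ and $k$ both lie in $\langle w, G_{d+1}\rangle$ up to the correction elements that are returned to $L$. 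Each branch thus rewrites the generating set without changing the generated group. I would also note the consistency point already flagged before the lemma: the exponents $-l(h)/l(I[d])$ etc.\ are genuine integers because after the insertion $l(I[d])$ divides both $l(h)$ and $l(k)$.

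For the partial-igs claim, I would verify the three conditions of Lemma \ref{igs} for the final list $J$. The depth-strictness condition (c) is immediate from the data structure: $J$ is a length-$n$ list whose entry at position $i$, if nonempty, has depth exactly $i$, so distinct entries have strictly increasing depths. Conditions (a) and (b)—that conjugates $u_i^{u_j}$ and relative-order powers $u_i^{r(u_i)}$ fall into the appropriate tail $U_{j+1}$—I would establish by maintaining, as a second loop invariant, that the nonempty entries of $I$ always form a partial igs of the subgroup they generate. The key observation is that any element produced by the three "add to $L$" steps (2c)(ii), (2d)(iv), (2d)(v) has depth strictly greater than the current depth $d$; so by the time the loop finishes, every such leftover element has been fully absorbed into deeper positions, and the tail-containment conditions hold at every position. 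Because each entry is stored in normalised form, Remark \ref{norm}(a),(b) guarantee the depth and leading-exponent bookkeeping used in these checks is consistent.

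The main obstacle I expect is the igs-property verification for condition (a), the conjugation closure $u_i^{u_j} \in U_{j+1}$, rather than the subgroup-equality part, which is essentially formal. The subtlety is that the algorithm never explicitly computes conjugates $u_i^{u_j}$; it only reduces newly added generators against existing entries by leading-exponent cancellation. I would therefore need to argue that closure under conjugation is a \emph{consequence} of the terminated state rather than something enforced step-by-step: once $L$ is exhausted, any conjugate $u_i^{u_j}$, being an element of $\langle J \rangle$ expressible in the tail $G_{j+1}$, must already reduce to the identity against the entries of $J$ at depths $>j$, for otherwise AddGenToPIgs applied to it would produce a nonempty insertion and contradict the maximality/stability of the terminated configuration. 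Making this last reduction argument precise—ideally by invoking that $\langle J\rangle$ together with the normalised entries gives a well-defined normal form so that membership in $U_{j+1}$ can be read off from depths—is where I would spend the most care.
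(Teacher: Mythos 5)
Your treatment of the subgroup-equality claim is essentially the paper's own argument: the invariant that $\langle I, L\rangle$ is preserved through each pass of the loop, evaluated at initialisation and termination, with a branch-by-branch check that inserted elements are products of $h$, $k$ and existing entries (giving $\subseteq$) and that $h$ and $k$ are recoverable from the new entry together with the correction elements returned to $L$ (giving $\supseteq$). That half of your proposal is correct and matches the paper.

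The problem is in the other half: you have misread what ``partial igs'' means. The paper defines a partial igs purely as a data structure --- a list of length $n$ whose $i$th entry is either empty or a \emph{normalised element of depth $i$}. It does \emph{not} require the closure conditions of Lemma \ref{igs}; those define an igs, not a partial igs. Consequently the partial-igs claim of this lemma is dispatched in one sentence (the algorithm only ever inserts normalised elements at the position equal to their depth), and the elaborate argument you plan for conjugation closure $u_i^{u_j}\in U_{j+1}$ is not needed here. Worse, that argument could not be made to work, because the conclusion you are aiming at is false in general: the output of {\bf AddGenToPIgs} need not be an igs of the subgroup it generates. Termination of the loop only means $L$ is empty; it does not mean that conjugates or relative-order powers of the entries reduce to the identity against the deeper entries, since such elements are never placed into $L$ by this routine. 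This is precisely why the paper introduces the separate algorithm {\bf IgsByGenerators}, which explicitly feeds $g^{r(g)}$ and the commutators $[g,h]$ back into $L$ and only then invokes Lemma \ref{igs} to conclude that the result is an igs. So the place where you announce you would ``spend the most care'' is an attempt to prove, at the wrong stage of the development, a statement that does not hold for this algorithm.
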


\begin{proof}
$J$ is a partial igs, since we only add normalised elements at the places 
associated with their depth. It remains to prove $\langle J \rangle = 
\langle I, g \rangle$.
\medskip

$\subseteq$: 
Each element that is inserted into $I$ during the algorithm is a product
of elements of $I \cup \{g\}$. Hence $J \subseteq \langle I, g \rangle$
and this part follows. 
\medskip

$\supseteq$: 
We show that $\langle L, I \rangle$ does not change in the course of the
algorithm. Since $I \cup \{g\} = I \cup L$ to begin with and $J = I \cup
L$ at the end, this yields the desired result. We consider the changes made
to $I$ and $L$ in the course of the algorithm. In Step (2a) we take an 
element $h$ from $L$. There are several cases: \\
(Case 1): $I[d]$ is empty and $r_d = 0$. Then we add $h$ or $h^{-1}$ to 
$I$ and the result follows. \\
(Case 2): $I[d]$ is empty and $r_d > 0$. Then we add the normalization
$h^z$ to $I$ and $h \cdot (h^z)^{-q}$ to $L$ for some $q$. Hence $h 
= h \cdot (h^z)^{-q} \cdot (h^z)^q$ can be obtained from $L$ and $I$ 
and the result follows. \\
(Case 3): $I[d] = k$. Then we add the normalisation of $(h^u k^v)$ to 
$I$ and suitable quotients of $h$ and $k$ to $L$. As in Case 2, the
quotients yield that $h$ and $k$ can be recovered from $L$ and $I[d]$.
Hence the result follows in this case also.
\end{proof}

We note two obvious improvements of the algorithm. 
\begin{items}
\item[(1)]
If there exists $l \in \{1, \ldots, n\}$ so that $l(I[d]) = 1$ 
for $l \leq d \leq n$, 
then we can improve the break in Step (2b) to: 'If $d \geq l$ then 
go back to (2)'.We can also replace the elements in $I$ so that 
$I[d] = g_d$ for $d \geq l$.
\item[(2)]
In Step (2d) we insert the normalisation of $w$ only if its
leading exponent is not equal to $b$. Further, if the leading 
exponent of the normalisation of $w$ equals either $a$ or $b$,
then only one left quotient needs to be added to $L$.
\end{items}

\subsection{Computing an igs}

The following algorithm takes a list $L$ of elements of $G$ and determines
an igs for the subgroup they generate. The algorithm is based on Lemma
\ref{igs}. 
\bigskip

{\bf IgsByGenerators}($L$):
\begin{items}
\item[(1)]
Initialise $I$ as a list of length $n$ with empty entries. 
\item[(2)]
While $L$ is not empty do:
\begin{items}
\item[(a)]
Take an element $g$ from $L$ and eliminate it in $L$.
\item[(b)]
Run {\bf AddGenToPIgs}($I$, $g$).
\item[(c)]
Let $N$ denote the list of changes to $I$ in (2b).
\item[(d)]
For $g$ in $N$ do:
\begin{items}
\item[(i)] If $r(g)$ is finite, then add $g^{r(g)}$ to $L$. 
\item[(ii)] For $h$ in $I$ with $h \neq g$ add $[g, h]$ to $L$.
\end{items}
\end{items}
\item[(3)] Return $I$.
\end{items}
\bigskip

The algorithm terminates, since the depths of the elements in $L$
increases in each step. The algorithm determines an igs for $\langle
L \rangle$, since it returns a list that generates $\langle L 
\rangle$ and satisfies the conditions of Lemma \ref{igs}.

\section{Computing the order and the index}

Suppose that a subgroup $U$ of $G$ is given by a set of generators.
Then $U$ and $[G:U]$ can both be read off from an igs of $U$.

\begin{lemma}
Let $u_1, \ldots, u_m$ be an igs for $U$, let $D = \{d(u_i) \mid
1 \leq i \leq m\}$ and let $\ol{D} = \{1, \ldots, n\} \setminus D$.
\begin{items}
\item[\rm (a)]
$|U| = r(u_1) \cdots r(u_m)$.
\item[\rm (b)]
$[G:U] = l(u_1) \cdots l(u_m) \cdot \prod_{d \in \ol{D}} r_d$.
\end{items}
\end{lemma}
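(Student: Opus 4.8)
The plan is to prove both parts by exploiting the two subnormal series that are available: the series $U = U_1 \geq U_2 \geq \cdots \geq U_m \geq U_{m+1} = 1$ inside $U$ for part (a), and the defining series $G = G_1 \geq \cdots \geq G_{n+1} = 1$ together with its intersections with $U$ for part (b). Part (a) should fall out almost immediately from Lemma~\ref{igs}: that lemma guarantees that each quotient $U_i/U_{i+1}$ is cyclic of order $r(u_i)$ (with the convention $r(u_i) = \infty$ precisely when $r_{d(u_i)} = 0$). Telescoping the indices along this series then gives $|U| = \prod_{i=1}^m [U_i : U_{i+1}] = \prod_{i=1}^m r(u_i)$, where any infinite factor forces $|U| = \infty$. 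Nothing beyond Lemma~\ref{igs} is needed here.

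For part (b) I would work with the intersection series $V_i := G_i \cap U$, so that $V_1 = U$, $V_{n+1} = 1$, and $V_{i+1} = G_{i+1} \cap V_i \unlhd V_i$. By the second isomorphism theorem $[V_i G_{i+1} : V_i] = [G_{i+1} : V_{i+1}]$, and combining this with the tower law $[G_i:V_i] = [G_i : V_i G_{i+1}]\,[V_i G_{i+1} : V_i]$ yields the recursion $[G_i : V_i] = [G_i : V_i G_{i+1}]\,[G_{i+1} : V_{i+1}]$. Iterating from $i=1$ to $n$ and using $V_{n+1} = G_{n+1} = 1$ then collapses to
\[ [G:U] = \prod_{i=1}^{n} [G_i : V_i G_{i+1}]. \]
The task thereby reduces to identifying each factor $[G_i : V_i G_{i+1}] = [G_i/G_{i+1} : \ol{V_i}]$, the index of the image $\ol{V_i}$ of $V_i$ in the cyclic group $G_i/G_{i+1}$.

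I would then split into two cases according to whether $i$ is a depth. The igs property says the distinct terms of the series $G_i \cap U$ are exactly $U_1, \ldots, U_m, 1$, and that this series strictly decreases only at the values $i = d(u_j)$. Hence, if $i \in \ol D$, the series does not jump, so $V_i = V_{i+1} \subseteq G_{i+1}$, giving $V_i G_{i+1} = G_{i+1}$ and a factor $[G_i : G_{i+1}] = r_i$. If instead $i = d(u_j) \in D$, then $V_i = U_j$, and since $U_{j+1} = G_{d(u_{j+1})} \cap U \subseteq G_{i+1}$ (using the strict monotonicity of depths from Lemma~\ref{igs}), the image $\ol{V_i}$ is generated by the image of $u_j$ alone. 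In the cyclic group $G_i/G_{i+1} = \langle \ol{g_i}\rangle$ this image is $\ol{g_i}^{\,l(u_j)}$, because $u_j$ has depth $i$ and leading exponent $l(u_j)$.

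The crucial point, which I expect to be the main obstacle, is evaluating this last index correctly, and it is exactly here that the normalisation built into the igs is essential. In general the subgroup $\langle \ol{g_i}^{\,a}\rangle$ of a cyclic group of order $r_i$ has index $\gcd(a,r_i)$, not $a$; the two agree only when $a \mid r_i$. By the definition of normalisation the leading exponent of a normalised element of depth $i$ equals $\gcd(a, r_i)$ and hence divides $r_i$ when $r_i > 0$, while for $r_i = 0$ it is taken positive; in either case $\langle \ol{g_i}^{\,l(u_j)}\rangle$ has index exactly $l(u_j)$ in $G_i/G_{i+1}$. Substituting the two cases into the telescoped product gives $[G:U] = \prod_{j=1}^m l(u_j) \cdot \prod_{d \in \ol D} r_d$, as claimed. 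The only remaining delicacy is the bookkeeping of infinite indices: a single $i \in \ol D$ with $r_i = 0$, or a depth position over an infinite quotient, makes $[G:U]$ infinite, and the formula is to be read with that convention.
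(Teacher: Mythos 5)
Your proof is correct. The paper states this lemma without any proof at all, so there is nothing to compare against; your telescoping argument along $V_i = G_i \cap U$, reducing each factor to the index of $\langle \ol{g_i}^{\,l(u_j)}\rangle$ in the cyclic quotient $G_i/G_{i+1}$, is the natural route and you correctly isolate the one genuinely delicate point: that the index is $\gcd(l(u_j), r_i)$ rather than $l(u_j)$, and that these agree precisely because a normalised element of depth $i$ has leading exponent $\gcd(a, r_i)$ dividing $r_i$ (respectively a positive leading exponent when $r_i = 0$). Note that the paper's bare definition of an igs does not actually require the $u_i$ to be normalised, so part (b) as literally stated needs that extra hypothesis (which the algorithm guarantees); your remark that normalisation is essential is exactly the right caveat, as is your reading of $r_d = 0$ and infinite relative orders as producing an infinite index.
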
 

\section{Testing equality of subgroups}

Suppose that two subgroups $U$ and $V$ of $G$ are given. We would
like to have an effective test for $U = V$. We say that an igs
$u_1, \ldots, u_m$ is {\em canonical} if the normal forms
\[ u_i = g_1^{e_{i1}} \cdots g_n^{e_{in}} \]
satisfy that if $d(u_k) = d$ then 
$e_{id} \in \{0, \ldots, l(u_k)-1\}$ for $1 \leq i \leq m$.
It is not difficult to determine a canonical igs from an arbitrary
one by replacing $u_i$ by $u_i \cdot u_k^{-q}$ for all $i$ and $k$
where $e_{id} = l(u_k) q + r$ with $0 \leq r < l(u_k)$ is determined
by division with remainder.

\begin{lemma}
Two subgroups $U$ and $V$ are equal if and only if their canonical
igs coincide.
\end{lemma}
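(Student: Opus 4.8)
The plan is to prove both directions of the biconditional, with the nontrivial content lying entirely in the forward direction.

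The reverse direction is immediate: if the canonical igs of $U$ and the canonical igs of $V$ coincide, then in particular they are equal as generating sets, so they generate the same subgroup and $U = V$.

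For the forward direction, suppose $U = V$. I would first argue that the \emph{depths} occurring in any igs of $U$ are an invariant of $U$, independent of the chosen igs. Indeed, by the definition of an igs the series $U_i = \langle u_i, \ldots, u_m\rangle$ coincides with the distinct terms of $G_j \cap U$, so the set $D = \{d(u_i)\}$ is exactly the set of indices $j$ at which $G_j \cap U \neq G_{j+1}\cap U$; this is intrinsic to $U$. Hence any two igs of $U$ have the same depth set, and the entries sit at the same positions. Next I would show that the \emph{leading exponents} at each depth are also invariants: for $d \in D$ the quotient $(G_d \cap U)/(G_{d+1}\cap U)$ embeds into the cyclic group $G_d/G_{d+1}$, and $l(u_i)$ is precisely the leading exponent of the normalised generator of the image of that cyclic subgroup, which by Remark~\ref{norm}(c) is the unique generator with smallest positive leading exponent. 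Thus $l(u_i)$ is determined by $U$ alone. Since $U = V$, the canonical igs of $U$ and of $V$ agree in depth set and in all leading exponents.

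It then remains to prove that, once depths and leading exponents are fixed, the canonical normal form is unique. Here the defining property of a canonical igs does the work: for the generator at depth $d$, every entry $e_{id}$ is reduced into the range $\{0, \ldots, l(u_k)-1\}$. I would proceed by downward induction on depth. Suppose two canonical igs $u_1,\ldots,u_m$ and $v_1,\ldots,v_m$ generate the same $U$ and agree at all depths greater than some $d \in D$, say with $u_k, v_k$ the entries at depth $d$. Since both have leading exponent $l(u_k)=l(v_k)$ at depth $d$, the element $u_k v_k^{-1}$ lies in $G_{d+1}\cap U = \langle u_j : d(u_j) > d\rangle$, and I would express it in that lower part. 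The canonical reduction condition forces each coordinate $e_{id}$ (for $d \in D$) to lie in a fixed residue system $\{0,\ldots,l(u_k)-1\}$; combined with the uniqueness of the normal form in $G$, this pins down every exponent of $u_k$ exactly, so $u_k = v_k$. The main obstacle I anticipate is precisely this last step: verifying that the canonicalisation genuinely yields a \emph{unique} representative rather than merely a reduced one, which requires checking that the reduction at depth $d$ is not disturbed by subsequent reductions at greater depths. Handling that interaction carefully — ideally by performing the reductions in order of decreasing depth, so that fixing a coordinate at depth $d$ is never undone later — is the crux of the argument.
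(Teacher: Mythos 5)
The paper states this lemma without any proof, so there is nothing to compare against; judged on its own merits, your proposal has the right architecture (reverse direction trivial; depths and then leading exponents are invariants of $U$; downward induction on depth for uniqueness), and the first two steps are correct, granting the implicit convention that igs entries are normalised --- without that convention the lemma is false, e.g.\ in $G=\langle g_1\rangle\cong\Z$ the lists $(g_1^{2})$ and $(g_1^{-2})$ are both vacuously canonical igs of the same subgroup. (Note also that the paper's condition must be read with $i\neq k$, since $e_{kd}=l(u_k)\notin\{0,\ldots,l(u_k)-1\}$.) However, the step that carries the entire content of the lemma is exactly the one you leave open: ``combined with the uniqueness of the normal form in $G$, this pins down every exponent of $u_k$ exactly'' is an assertion, not an argument, and you yourself flag it as the unresolved obstacle. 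To close it you need three concrete facts. First, if $u_j=v_j$ for all $j>k$, then $u_kv_k^{-1}\in G_{d+1}\cap U=U_{k+1}$, and since $U_{k+1}\unlhd U_k$ you may rewrite this as $u_k=v_k w$ with $w\in U_{k+1}$; if $w\neq 1$ then $d(w)=\delta=d(u_j)$ for some $j>k$, and $l(w)$ is a \emph{nonzero multiple of $l(u_j)$ modulo $r_\delta$}, because the image of $U_j$ in $G_\delta/G_{\delta+1}$ is generated by $l(u_j)$. Second, right multiplication by an element of $G_\delta$ leaves the normal-form exponents at positions $<\delta$ unchanged (elements of one left coset $xG_\delta$ share these exponents) and adds $l(w)$ to the exponent at position $\delta$ modulo $r_\delta$; this is where the non-commutativity is absorbed, via $G_{\delta+1}\unlhd G_\delta$. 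Third, $l(u_j)$ divides $r_\delta$ by normalisation, and two integers in $\{0,\ldots,l(u_j)-1\}$ cannot differ by a nonzero multiple of $l(u_j)$ modulo $r_\delta$; hence $l(w)\equiv 0$, a contradiction, so $w=1$ and $u_k=v_k$.

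A second, smaller but genuine error: you propose to perform the canonicalising reductions ``in order of decreasing depth''. That is backwards. Replacing $u_i$ by $u_i\cdot u_k^{-q}$ with $d(u_k)=d$ alters the exponents of $u_i$ at positions $\geq d$ and preserves those at positions $<d$, so the reductions must be applied for \emph{increasing} depth of the modifier $u_k$; in decreasing order, a later reduction at a shallower depth destroys the window already achieved at a deeper one. With the order corrected and the three facts above supplied, your strategy does yield a complete proof.
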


\def\cprime{$'$} \def\cprime{$'$}

\end{document}